\documentclass[12pt]{amsart}
\usepackage[a4paper,margin=3cm]{geometry}
\usepackage{amsmath,amssymb,amsthm}
\usepackage[T1]{fontenc}
\usepackage{newtxtext,newtxmath}
\usepackage[colorlinks=true,linkcolor=blue,citecolor=blue,urlcolor=blue]{hyperref}
\usepackage{tikz}
\newtheorem{theorem}{Theorem}
\newtheorem{lemma}[theorem]{Lemma}

\newtheorem{observation}[theorem]{Observation}
\theoremstyle{remark}
\newtheorem{remark}[theorem]{Remark}

\newcommand{\N}{\mathcal N}
\newcommand{\Ske}{\mathcal S}

\usetikzlibrary{backgrounds}
\usetikzlibrary{arrows}
\pgfdeclarelayer{edgelayer}
\pgfdeclarelayer{nodelayer}
\pgfsetlayers{background,edgelayer,nodelayer,main}
\tikzstyle{none}=[inner sep=0mm]
\tikzstyle{bluenode}=[fill=blue, draw=black, shape=circle, minimum
size=0.2cm,
inner sep=0pt]
\tikzstyle{whitenode}=[fill={rgb,255: red,245; green,245;
blue,245}, draw=black, shape=circle, minimum size=0.1cm, inner
sep=1pt]
\tikzstyle{yellownode}=[fill=yellow, draw=black, shape=circle, minimum size=0cm, inner sep=1pt]
\tikzstyle{pinknode}=[fill={rgb,255: red,255; green,191; blue,191}, draw=black, shape=circle, minimum size=0cm, inner sep=1pt]
\tikzstyle{blacknode}=[fill=black, draw=black, shape=circle, minimum size=0.2cm, inner sep=0pt]
\tikzstyle{rednode}=[fill={rgb,255: red,244; green,0; blue,0}, draw=black,
shape=circle, minimum size=0.2cm, inner sep=0.1pt]
\tikzstyle{square}=[draw=black, shape=rectangle, minimum
size=0.1cm, inner sep=3pt,fill={rgb,255: red,245; green,245;
blue,245},scale=0.5]
\tikzstyle{dot}=[fill=black, draw=black, shape=circle, minimum size=0.04cm, inner sep=0pt]
\tikzstyle{blackedge}=[line width=1.2pt, black]
\tikzstyle{blackedge_thick}=[-, draw=black, line width=1.5pt,
fill=none]
\tikzstyle{rededge}=[-, draw=red]
\tikzstyle{rededge_thick}=[-, line width=0.45mm, draw=red]
\tikzstyle{blackedge_opacity}=[-, -, draw={rgb,255: red,91; green,87; blue,84},
fill=none, line width=0.4mm, opacity=0.7]
\tikzstyle{balck_dash}=[-, dash pattern=on 0.2mm off 0.2mm]
\tikzstyle{blue_thick}=[-, line width=0.5mm, draw=blue]
\tikzstyle{blueedge}=[-, line width=1pt, blue, opacity=0.7]

\title{The maximum number of $k$-Cliques of 7-Connected 1-Planar Graphs}

\author{Yuanqiu Huang}
\address{School of Mathematics and Statistics, Hunan Normal University,
Changsha, Hunan 410081, P.R. China}
\email{hyqq@hunnu.edu.cn}

\author{Licheng Zhang}
\address{School of Mathematics and Statistics, Hunan Normal University,
Changsha, Hunan 410081, P.R. China}
\email{lczhangmath@163.com}

\thanks{The work was supported by the National Natural Science Foundation of China
(Grant Nos. 12271157 and 12371346).}
\thanks{Corresponding author: Licheng Zhang.}

\begin{document}

\begin{abstract}
In 2023, Gollin, Hendrey, Methuku, Tompkins and Zhang determined the maximum number of
cliques in general \(1\)-planar graphs with order $n$. Their extremal examples have
connectivity at most three, except for a few small orders. At the high-connectivity end, we prove that every \(n\)-vertex \(7\)-connected \(1\)-planar graph has at most \(4n-12\) edges, \(4n-16\) triangles, and \(n-6\) copies of \(K_4\). Hence the total number of cliques is at most \(10n-33\). All bounds are sharp for infinitely many values of \(n\).
\end{abstract}
\maketitle

\section{Introduction}
A \(k\)-clique of a graph is a set of \(k\) vertices that induces a copy of the complete graph \(K_k\).
Thus extremal edge problems are precisely \(2\)-clique-counting problems.
Zykov~\cite{Zykov} extended Tur\'an's theorem in this direction by determining
the maximum number of small cliques while forbidding larger cliques.
The problem of determining the maximum number of \(k\)-cliques has also been well studied for planar graphs. It is konwn that every $n$-vertex ($n\ge 3$) planar graph has at most $3n-6$ edges. Hakimi and Schmeichel~\cite{Hakimi} proved that an $n$-vertex planar graph contains at most $3n-8$ triangles, while Alon and Caro~\cite{Alon}, and independently Wood~\cite{Wood}, proved that it contains at most $n-3$ copies of $K_4$. The extremal graphs for both bounds are Apollonian networks. In contrast, if a planar graph has no separating triangle, then all its triangles are facial and it has no copy of $K_4$; in particular, a 4-connected $n$-vertex planar graph has at most $2n-4$ triangles and, for $n\ge 5$, no copy of $K_4$.

Introduced by Ringel in connection with the six-colour problem~\cite{Ringel},
\(1\)-planar graphs form a natural generalization of planar graphs.
A graph is \emph{1-planar} if it admits a drawing in the plane in which each edge is crossed at most once. A \emph{1-plane graph} is a 1-planar graph together with a fixed such drawing.  In 2023,  Gollin, Hendrey, Methuku, Tompkins and Zhang~\cite{Gollin} gave a complete
solution to the clique-counting problem for general \(1\)-planar graphs,
determining the sharp bounds for every fixed clique size and for the total
number of cliques. More precisely, if \(n=3q+s\) with \(s\in\{0,1,2\}\), then, apart from the
exceptional case \(n=8\), where the maximum number of triangles is \(32\), they
proved that the maximum number of triangles is
$
        19q+5s-18,
$
and, for \(4\le t\le 6\), the maximum number of copies of \(K_t\) is
$
        (q-1)\binom 6t+\binom{s+3}{t}.
$
They also determined the maximum total number of cliques, which is
$
        56(q-1)+2^{s+3}.
$
Furthermore, they gave a characterization of the extremal graphs. One consequence of that characterization is that,
except for a few small orders, the extremal graphs have vertex-connectivity at most three.

For \(1\)-planar graphs, the relevant connectivity range is quite small.
Every \(1\)-planar graph has a vertex of degree at most seven~\cite{Fabrici},
so no \(1\)-planar graph can be more than \(7\)-connected.  Thus, once one asks
for clique counts under connectivity assumptions, the natural cases are
\(r\)-connected \(1\)-planar graphs with \(4\le r\le 7\). Some general bounds follow directly from Corollary~5.3 of
Gollin et al.~\cite{Gollin}.  Indeed, let \(G\) be a \(4\)-connected
\(1\)-planar graph of order \(n\ge 7\), and extend \(G\) on the same vertex set
to a maximal \(1\)-planar graph \(G^+\). Then \(G^+\) is still
\(4\)-connected, and is \(K_6\)-free by Lemma~6.1 of
Gollin et al.~\cite{Gollin}. Hence Corollary~5.3 applies to \(G^+\), and since
\(G\subseteq G^+\), it gives
\[
        \N(G,K_3)\le 6n-14,\qquad
        \N(G,K_4)\le 4n-9,\qquad
        \N(G,K_5)\le n-2 .
\]
However, there is currently no evidence that these bounds are sharp for any
fixed connectivity \(k\) with \(4\le k\le 7\).

In this note we treat the endpoint \(r=7\).   For a graph \(G\), let \(\N(G,K_t)\) denote the number of copies of \(K_t\) in
\(G\).  We count the empty set as a clique; equivalently,
$
        \N(G,K_0)=1.
$

\begin{theorem}\label{thm:main}
Let \(G\) be a \(7\)-connected \(1\)-planar graph of order \(n\). Then
\[
\mathcal N(G,K_t)\le
\begin{cases}
4n-12, & t=2,\\
4n-16, & t=3,\\
n-6, & t=4,\\
0, & t\ge 5.
\end{cases}
\]
Consequently,  the total number
of cliques in \(G\) is at most \(10n-33\).  All these bounds are sharp for
infinitely many values of \(n\).
\end{theorem}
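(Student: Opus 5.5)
We work with a fixed 1-plane drawing of \(G\) with the minimum number of crossings, and pass to its \emph{planarization} and \emph{planar skeleton}. For each crossing pair \(ab, cd\), the four endpoints induce a \(K_4\) in a maximal setting, so we first show that in a 7-connected \(G\) we may add edges (the ``kite'' edges \(ac,cb,bd,da\)) without creating new crossings and without destroying 7-connectivity. Adding edges only increases clique counts, so it suffices to bound these counts for such a saturated drawing. Let \(\Ske\) be the plane graph obtained by deleting one edge from each crossing pair. Then every crossing lies inside a quadrilateral face of \(\Ske\) whose two diagonals are the crossing pair, and \(G\) is \(\Ske\) plus one extra diagonal per crossed quadrilateral.

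\textbf{Edge bound.} Let \(c\) be the number of crossings. Then \(|E(G)| = |E(\Ske)| + c\). Since \(\delta(G)\ge 7\), the standard discharging/Euler argument for 1-planar graphs applies: faces of \(\Ske\) of size at least 4 that contain no crossing carry positive charge. The key point of 7-connectivity is that \(\Ske\) cannot be a ``kite triangulation'' with all faces triangles or crossed quadrilaterals. That extreme gives \(4n-8\), and the known fact is that optimal 1-planar graphs (\(4n-8\) edges) have vertices of degree 6 that separate. We would show via Euler's formula that the total face deficiency is at least 4. Concretely, we prove that \(\Ske\) has at least two uncrossed non-triangular faces, or equivalent deficiency. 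The argument is that a vertex of degree 7 in a near-optimal structure forces a degree-6 or cut structure, contradicting 7-connectivity. This gives \(|E|\le 4n-12\). This step, which extracts exactly the constant \(-12\) from 7-connectivity, is the main obstacle; we expect a careful local analysis around low-degree vertices, using \(\delta\ge7\) and the count \(\sum(\deg-8)\) weighted against faces.

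\textbf{Triangles and \(K_4\).} Separating triangles, and more generally separating 3- or 4-cycles in \(\Ske\), yield small vertex cuts in \(G\). A separating cycle in the planarization of length \(\ell\) gives a cut of size at most about \(2\ell\), and 7-connectivity forbids the short ones. Hence every triangle of \(G\) is ``local'': it is either a facial triangle of \(\Ske\) or lies within a crossed kite. Each kite contributes 4 triangles and one \(K_4\); each uncrossed triangular face contributes 1 triangle; no \(K_5\) exists because a \(K_5\) would need two crossings sharing a kite, which would produce a separating structure of size at most 6. Let \(f_3\) be the number of uncrossed triangular faces of \(\Ske\). Then \(\N(G,K_3)=f_3+4c\) and \(\N(G,K_4)=c\). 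Euler's formula gives \(f_3+2c\le 2n-4-(\text{deficiency})\). Combined with the deficiency bound from the edge step, this yields \(c\le n-6\) and \(f_3+4c\le 4n-16\). The total is then \(1+n+(4n-12)+(4n-16)+(n-6)=10n-33\).

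\textbf{Sharpness.} We would take a 4-regular-ish planar quadrangulation structure, such as the antiprism-type double wheel family, and insert both diagonals in all but a bounded number of faces. We then verify 7-connectivity directly, for example via nested cycles \(C_m\) joined in antiprism fashion with caps. Finally, we check that the equalities \(c=n-6\) and deficiency exactly 4 hold for infinitely many \(n\).
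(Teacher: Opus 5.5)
Your proposal has a genuine gap at the step you yourself call ``the main obstacle'', and the mechanism you suggest for it is the wrong one. After saturation, the paper passes to a maximal $1$-planar supergraph and uses the rich drawing of Lemma~\ref{lem:gollin}. Its skeleton $H$ is $3$-connected, and every face of $H$ is either an uncrossed triangle or a kite. Euler's formula then gives the \emph{exact} identities $f_3+2f_4=2n-4$ and $|E(G)|=4n-8-f_3/2$, where your $c$ is $f_4$. So in the saturated setting:
\begin{itemize}
\item the skeleton is always a ``kite triangulation'' in your sense;
\item there are no uncrossed faces of size at least $4$ to carry charge;
\item with $\delta\ge 7$ there are no degree-$6$ vertices to exploit;
\item the whole deficit from $4n-8$ is carried by the uncrossed triangular faces.
\end{itemize}
All three bounds reduce to the single inequality $f_3\ge 8$. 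This is Biedl's lemma (Lemma~\ref{lem:biedl}), the key ingredient of the paper, and it is absent from your proposal. Beyond the saturated structure it uses only $\delta\ge 7$, not $7$-connectivity.

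Your remark about $\sum(\deg-8)$ points the right way, but the missing observation is a parity one. Let $t(v)$ and $q(v)$ count the triangular faces and kites of $H$ at $v$. Then $\deg_G(v)=t(v)+2q(v)$, so every degree-$7$ vertex lies on an uncrossed triangular face. Combine this with $n_7\ge\sum_v(8-\deg_G v)=16+f_3$ and $\sum_v t(v)=3f_3$: you get $3f_3\ge 16+f_3$, i.e.\ $f_3\ge 8$. Your bookkeeping ``$f_3+2c\le 2n-4-(\text{deficiency})$'' is also off. The relation is the equality $f_3+2c=2n-4$. Subtracting a separate deficiency of $4$ would give only $c\le n-4-f_3/2$, which is not $c\le n-6$ without a lower bound on $f_3$ anyway.

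The remaining parts are sketchy but closer to workable.

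\textbf{Saturation.} You need more than inserting kite edges: every face of the skeleton must be an uncrossed triangle or a kite. This is what Lemma~\ref{lem:gollin} supplies from maximality. Also, your $\Ske$ deletes only one edge of each crossing pair, so the other diagonal stays in $\Ske$ and the crossing does not lie inside a quadrilateral face.

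\textbf{Triangle locality.} This step is a legitimate alternative to the paper's route. The paper goes through the $4$-connectivity of the skeleton (Lemma~\ref{lem:skeleton-four-connected}) to kill the $s_3(H)$ term of Lemma~\ref{lem:gollin}. Directly: suppose a triangle $xyz$ has edge $xy$ crossed by $ab$, with $z\notin\{a,b\}$. Then $a$ and $b$ lie on opposite sides of the triangle, so it is a conflict $3$-cycle, excluded by Lemma~\ref{lem:no-sep-triangle}. A non-facial uncrossed triangle is excluded likewise. It follows quickly that every $K_4$ is a kite and that no $K_5$ exists. However, your ``$2\ell$'' cut bound says nothing about $4$-cycles, since $8>7$, so do not claim those are excluded.

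\textbf{Sharpness.} This is not established. The paper cites the $k$-layered double stop-sign graphs of Hoffmann, Reddy and Seemann, of order $n=24+8k$. The counting above shows that equality forces $f_3=8$, exactly $24$ vertices of degree $7$, and all other vertices of degree $8$. So a double-wheel-type family whose poles have degree growing with $n$ cannot be extremal.
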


The proof has two ingredients. First, in a rich 1-planar drawing of a maximal 7-connected 1-planar graph, the planar skeleton is 4-connected. This removes the separator term in the triangle-counting formula of Gollin et al. Second, Biedl's lower bound on the number of uncrossed triangular faces in a triangulated 1-plane graph of minimum degree  seven gives the exact deficit from the optimal 1-planar edge bound $4n-8$.

\section{Preliminaries}
Unless stated otherwise, all graphs considered in this paper are finite and
simple.  For a graph \(G\), let \(V(G)\) and \(E(G)\) denote its vertex set and
edge set, respectively. For a vertex set \(S\subseteq V(G)\), we write \(G-S\) for
the graph obtained from \(G\) by deleting the vertices in \(S\) and all edges
incident with them. A vertex set \(S\subseteq V(H)\) is a separator of \(H\) if
\(G-S\) is disconnected; it is a \(k\)-separator if \(|S|=k\).
 Let $G$ be a 1-plane graph. 
A \emph{face} of $G$ is a connected component of $\mathbb{R}^2 \setminus G$.   A face of $G$ is \emph{crossed} if a crossing point lies on its boundary, and \emph{uncrossed} otherwise. If the boundary of an uncrossed face is a cycle, we call this cycle a \emph{facial cycle}.For an edge $e$ of $G$, the crossing points lying on $e$, together with the two endvertices of $e$, divide $e$ into arcs, called the \emph{edge-segments} of $e$. The \emph{degree} $\deg(f)$ of a face $f$ is the number of edge-segments encountered in a closed walk along the boundary of $f$, counted with multiplicity. A face of degree \(k\) is called a \emph{\(k\)-face}. In particular, a \(3\)-face is also
called a triangular face. For a 1-planar drawing $\phi$ of $G$, the \emph{planar skeleton} $\Ske(\phi)$ is the plane spanning subgraph consisting of the uncrossed edges. If $H$ is a plane graph, write $f_i(H)$ for the number of faces of degree $i$, and write $s_3(H)$ for the number of 3-separators of $H$.   A \(1\)-planar graph \(G\) is called \emph{maximal \(1\)-planar} if no edge
can be added between two non-adjacent vertices of \(G\) so that the resulting
graph is still \(1\)-planar.

A cycle containing exactly \(k\) vertices is called a \(k\)-cycle.
In particular, a \(3\)-cycle is also called a triangle.

We shall use the following elementary planar separation observation.

\begin{observation}\label{lem:separator-curve}

Let \(H\) be a \(3\)-connected plane graph, and let \(S\) be a
\(3\)-separator of \(H\). Then there exists a simple closed curve
\(\ell\) in the plane such that \(\ell\cap H=S\), and each component of
the plane minus \(\ell\) contains at least one component of \(H-S\).
\end{observation}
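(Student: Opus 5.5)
We prove the Observation using the standard correspondence between separators of a plane graph and closed curves meeting it only in vertices. Since every 3-connected plane graph can be extended, by adding edges inside faces, to a plane triangulation on the same vertex set, the natural route would go through the triangulation. But adding edges may destroy the separator \(S\), so we avoid that. We work instead with the \emph{vertex--face incidence graph} (radial graph) \(R(H)\). Its vertices are the vertices and faces of \(H\), and a vertex \(v\) is joined to a face \(f\) whenever \(v\) lies on the boundary of \(f\). Because \(H\) is \(3\)-connected, every face boundary is a cycle (Whitney), and \(R(H)\) is a plane quadrangulation: each of its faces corresponds to an edge of \(H\), with the two endpoints of that edge and the two faces on either side of it.

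First, let \(C\) be a component of \(H-S\). Consider the union \(U\) of \(C\), all edges from \(C\) to \(S\), and the interiors of all faces incident with a vertex of \(C\). Take the boundary of this closed region (more precisely, of the component of the plane containing \(C\) obtained by cutting along a suitable curve). We claim this boundary passes through vertices of \(H\) only at vertices of \(S\), and otherwise runs through face interiors. Each face \(f\) meeting both \(C\) and \(V(H)\setminus(C\cup S)\) is crossed by such a curve from one vertex of \(S\) on \(\partial f\) to another. Concretely, the curve is traced as a closed walk in \(R(H)\) of the form \(s_1 f_1 s_2 f_2 \cdots\), alternating between vertices of \(S\) and faces. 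Each face \(f_i\) contains vertices of \(C\) on one side of the chord \(s_i s_{i+1}\) and no vertex of \(C\) on the other side. This is the cycle in \(R(H)\) separating \(C\) from the rest; the existence of such a separating cycle is the usual fact that a minimal vertex cut in a plane graph corresponds to a cycle in the radial graph.

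Second, we show the separating cycle can be chosen to use each vertex of \(S\) exactly once, so that it is a simple closed curve \(\ell\) with \(\ell\cap H=S\). Take a minimal separating closed walk \(W\) in \(R(H)\) whose \(H\)-vertices lie in \(S\). If \(W\) repeated a vertex of \(S\), it would split into two shorter closed walks. One of these alone would already separate some component of \(H-S\) from another component, and it would use at most two distinct vertices of \(S\). That would give a separator of size at most \(2\), contradicting \(3\)-connectivity. Hence \(W\) is a cycle \(s_1f_1s_2f_2s_3f_3\) of length \(6\). Drawing \(\ell\) inside each \(f_i\) as a simple arc from \(s_i\) to \(s_{i+1}\), disjoint from \(H\) except at its endpoints, gives the required simple closed curve. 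By the Jordan curve theorem, \(\ell\) has an interior and an exterior. By construction \(C\) lies on one side and the vertices of \(H-S-C\) (nonempty, since \(S\) is a separator) on the other. Every component of \(H-S\) lies entirely on one side, since it is connected and disjoint from \(\ell\). Therefore each side contains at least one component.

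The main obstacle is the second step: showing that a minimal separating walk is a genuine \(6\)-cycle. In particular, one must argue carefully that a repeated vertex of \(S\) or a repeated face yields a smaller separator or a shorter separating walk. The face-repetition case needs the fact that face boundaries are cycles, so that two vertices of \(S\) on a common face can be joined inside that face by one arc.
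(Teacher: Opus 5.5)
The paper records this as an elementary observation and gives no proof, so your sketch has to carry the argument itself. The strategy is the right one: closed curves through faces and vertices of \(S\), encoded in the radial graph \(R(H)\). As written, however, neither step is established.

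\textbf{Step~1.} It never actually produces the walk. The boundary of the region formed by \(C\), its edges to \(S\) and the faces incident with \(C\) runs along edges of \(H\) joining two vertices of \(S\). It also runs along the parts of mixed face boundaries that contain vertices of other components. So it is not a curve meeting \(H\) only in \(S\), and ``cutting along a suitable curve'' presupposes the very curve you are trying to build. Falling back on ``the usual fact'' does not help: a \(3\)-separator of a \(3\)-connected graph is a minimal separator, so that fact \emph{is} the Observation, and Step~2 would then be superfluous.

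\textbf{Step~2.} It is incorrect as stated, for two reasons.
\begin{itemize}
\item Splitting a closed walk at a repeated vertex of \(S\) need not give a sub-walk with at most two vertices of \(S\). Splitting \(s_1f_1s_2f_2s_3f_3s_1f_4s_2f_5s_3f_6s_1\) at \(s_1\) gives two closed walks, each using all three vertices of \(S\).
\item The claim that one of the two sub-walks still separates is unjustified. The sub-walks can cross each other, at shared vertices of \(S\) or inside a shared face with interleaving endpoints. Two crossing closed curves can together separate points that neither separates alone; Janiszewski's theorem needs a connected intersection.
\end{itemize}
The contradiction has to come from the minimality of \(W\). That in turn needs a notion of ``separating'' that is additive under splitting, such as odd crossing parity with a fixed arc between two components.

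\textbf{A repair that handles both steps at once.} Since face boundaries are cycles, \(R(H)\) is a plane graph whose faces are quadrilaterals \(Q_e=vfwg\), one for each edge \(e=vw\) of \(H\), where \(f,g\) are the two faces at \(e\); the edge \(e\) lies inside \(Q_e\).
\begin{itemize}
\item Fix a component \(A\) of \(H-S\) and put \(B=V(H)\setminus(S\cup A)\neq\emptyset\). Let \(U\) be the union of the \(Q_e\) over all edges \(e\) with an end in \(A\).
\item An edge \(vf\) of \(R(H)\) lies between \(Q_e\) and \(Q_{e'}\), where \(e,e'\) are the two edges of the boundary of \(f\) at \(v\). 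If \(v\in A\), both quadrilaterals lie in \(U\). If \(v\in B\), neither does, because there are no \(A\)--\(B\) edges.
\item Hence the set \(D\) of edges of \(R(H)\) having \(U\) on exactly one side has all its \(H\)-ends in \(S\). Every vertex of \(R(H)\) has even degree in \(D\), so \(D\) is an edge-disjoint union of cycles of \(R(H)\).
\item A generic arc from \(a\in A\) to \(b\in B\) starts inside \(U\) and ends outside it. So it crosses \(D\) an odd number of times, and therefore crosses some cycle \(Z\subseteq D\) an odd number of times. By the Jordan curve theorem, \(Z\) separates \(a\) from \(b\).
\item We have \(Z\cap H=V(Z)\cap V(H)=:T\subseteq S\). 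An \(a\)--\(b\) path in \(H-T\) would have to avoid \(Z\), which is impossible, so \(T\) is a separator. By \(3\)-connectivity \(T=S\).
\item Thus \(Z=s_1f_1s_2f_2s_3f_3\) is the required \(\ell\).
\end{itemize}
Repeated faces, which you flagged as the main obstacle, are harmless. Once the three vertices of \(S\) are distinct, any two arcs in a common face share an endpoint and can be drawn internally disjoint.
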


%
%

The following result of Biedl  expresses the edge deficit of a triangulated 1-plane graph in terms of its uncrossed triangular faces.

\begin{lemma}[Biedl~\cite{Biedl}]\label{lem:biedl}
Let $G$ be a triangulated 1-plane multigraph, and let $t_3(G)$ be the number of its uncrossed triangular faces. Then
\[
|E(G)|=4|V(G)|-8-\frac{t_3(G)}2.
\]
If, in addition, $\delta(G)= 7$, then $t_3(G)\ge 8$.
\end{lemma}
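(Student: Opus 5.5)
The plan is to work in the planarization. Let \(G\) be a triangulated \(1\)-plane multigraph with \(n\) vertices and \(c\) crossings. Let \(P\) be the plane multigraph obtained by replacing each crossing point by a new degree-\(4\) vertex. Then \(P\) has \(n+c\) vertices and \(|E(G)|+2c\) edges, and every face of \(P\) has degree \(3\). So \(P\) is a plane triangulation, and Euler's formula gives \(F(P)=2(n+c)-4\) and \(|E(G)|+2c=3(n+c)-6\).

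Next I classify the faces. A triangular face cannot contain two crossing points: the segment joining them would lie on an edge crossed twice. Each crossing point has exactly four incident faces, all of them crossed \(3\)-faces, and distinct crossings give distinct faces. Hence \(F(P)=4c+t_3(G)\). Comparing this with \(F(P)=2(n+c)-4\) gives \(c=n-2-t_3/2\). Substituting into \(|E(G)|=3n+c-6\) yields \(|E(G)|=4n-8-t_3/2\), which is the first claim. The only care needed is to check that multiple edges and degenerate faces are handled by the word ``triangulated'', i.e.\ that every face of \(P\) really is bounded by three edge-segments.

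For the second claim I use a local count at each vertex \(v\). Let \(k(v)\) be the number of crossed edges at \(v\) and \(u(v)\) the number of uncrossed triangular faces at \(v\). In a crossed \(3\)-face \(v x w\) with crossing point \(x\), exactly one of the two sides at \(v\) is a half of a crossed edge. Hence every crossed edge at \(v\) is flanked by two crossed angles, every crossed angle meets exactly one crossed edge, and \(\deg(v)=2k(v)+u(v)\). In particular \(u(v)\equiv \deg(v)\pmod 2\), so every vertex of degree \(7\) lies on an uncrossed triangle. Summing gives \(\sum_v u(v)=3t_3\) and \(\sum_v k(v)=4c\).

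The main obstacle is turning this into \(t_3\ge 8\). The global identity \(\sum_v(\deg v-8)=-16-t_3\) only bounds \(t_3\) from above, so a genuinely local discharging argument is required. I would set initial charges \(\deg(v)-6\) on vertices, \(\deg(f)-6\) on faces of \(P\), and \(-2\) on each crossing vertex of \(P\); Euler's formula gives total charge \(-12\). Each crossing vertex would pull charge from its four endpoints, and each uncrossed triangle would be charged by its corners. The goal is to show that after redistribution every vertex of degree at least \(7\) and every crossing is nonnegative, while each uncrossed triangle carries charge at least \(-3/2\); this would force \(t_3\cdot 3/2\ge 12\). The delicate case, which I would attack first, is a degree-\(7\) vertex with \(u(v)=1\) and \(k(v)=3\). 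There I would exploit the kite structure of crossings in triangulated drawings to show that the neighbouring crossings can be paid by adjacent higher-degree vertices or by the uncrossed triangle. If the constants do not close, I would instead cite Biedl's original argument for this part.
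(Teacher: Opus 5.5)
Your proof of the edge formula is correct. In the planarization every face is a $3$-face containing at most one crossing point, so $F(P)=4c+t_3(G)$. Euler's formula then gives $c=n-2-t_3/2$ and $|E(G)|=4n-8-t_3/2$. Your local identity $\deg(v)=2k(v)+u(v)$ is also correct. The paper itself gives no proof of this lemma; it only cites Biedl.

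The gap is the second claim: you never establish $t_3(G)\ge 8$. The discharging scheme is only a stated goal, and as set up it does not balance. With charge $\deg(f)-6=-3$ on every face of $P$, the total is $-12-3F(P)=-6|V(P)|$, not $-12$; you would want $2\deg(f)-6$ on faces. The crossed faces are also missing from your final accounting.

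More to the point, no discharging is needed. Your claim that the global identity "only bounds $t_3$ from above" is where the argument goes wrong: that identity should be combined with your own parity observation. Let $n_7$ be the number of vertices of degree $7$.
\begin{itemize}
\item From $2|E(G)|=8n-16-t_3$ you get $\sum_v(8-\deg v)=16+t_3$. Since $\delta(G)=7$, each term is at most $1$ and is positive only at degree-$7$ vertices, so $n_7\ge 16+t_3$.
\item The identity $\deg(v)=2k(v)+u(v)$ forces $u(v)\ge 1$ at every degree-$7$ vertex, so $n_7\le\sum_v u(v)=3t_3$.
\end{itemize}
Hence $16+t_3\le 3t_3$, i.e.\ $t_3\ge 8$. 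Incidentally this also gives $n\ge n_7\ge 24$, matching the smallest $7$-connected examples. This short double count is the missing step; without it your proposal proves only the edge formula.
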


Let \(C\) be a cycle in a \(1\)-plane graph \(G\) such that no two edges of
\(C\) cross each other. Thus, the closed curve induced by the edges of \(C\)
separates the plane into two regions \(C_{\operatorname{int}}\) and
\(C_{\operatorname{out}}\). We call \(C\) a \emph{conflict cycle} if both
regions \(C_{\operatorname{int}}\) and \(C_{\operatorname{out}}\) contain one
or more vertices of \(G-V(C)\).

\begin{lemma}[Huang--Zhang--Wang~\cite{Huang}]\label{lem:no-sep-triangle}
No 3-cycle in a 7-connected 1-plane graph is conflict.
\end{lemma}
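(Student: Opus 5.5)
We argue by contradiction, using the Jordan curve theorem together with the fact that each edge of a \(1\)-plane graph is crossed at most once.

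Let \(G\) be a \(7\)-connected \(1\)-plane graph, and suppose \(C=xyz\) is a conflict \(3\)-cycle. Since no two edges of \(C\) cross each other, they form a simple closed curve. Let \(I\) and \(O\) be the sets of vertices of \(G-V(C)\) lying in \(C_{\operatorname{int}}\) and \(C_{\operatorname{out}}\); both are nonempty by assumption.

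\textbf{Step 1: few edges join \(I\) and \(O\).} An edge \(uw\) with \(u\in I\) and \(w\in O\) cannot pass through a vertex. By the Jordan curve theorem it must therefore cross an edge of \(C\). Each of \(xy,yz,zx\) is crossed at most once, so there are at most three such crossings. Hence the number \(a\) of edges between \(I\) and \(O\) satisfies \(a\le 3\).

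\textbf{Step 2: build a small separator.} Suppose first that \(|O|\ge a+1\). Let \(S\) consist of \(x,y,z\) together with the \(O\)-endpoints of the \(a\) edges between \(I\) and \(O\). Then \(|S|\le 6\). After deleting \(S\), the set \(I\) is still nonempty and \(O\setminus S\) is still nonempty. There is no edge between \(I\) and \(O\setminus S\), because every such edge has its \(O\)-endpoint in \(S\). So \(G-S\) is disconnected. Since \(G\) is \(7\)-connected, it has more than seven vertices and no separator of size at most \(6\), a contradiction. The case \(|I|\ge a+1\) is symmetric, using the \(I\)-endpoints instead.

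\textbf{Step 3: the remaining case.} It remains to treat \(|I|\le a\le 3\) and \(|O|\le a\le 3\); this case needs the minimum degree rather than a separator. Every vertex \(v\in I\) has degree at least \(7\). Its neighbours lie in \((I\setminus\{v\})\cup\{x,y,z\}\cup O\). Hence \(v\) has at least
\[
7-(|I|-1)-3=5-|I|
\]
neighbours in \(O\). Summing over \(v\in I\), the number of edges between \(I\) and \(O\) is at least \(|I|(5-|I|)\). For \(|I|\in\{1,2,3\}\) this quantity equals \(4,6,6\) respectively, so it is at least \(4\). This contradicts \(a\le 3\).

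The main point needing care is Step 1: that every edge from \(I\) to \(O\) genuinely uses one of the at most three crossings on \(C\). Edges incident to \(x\), \(y\) or \(z\) that cross \(C\) are harmless, since their endpoint on \(C\) is already in \(S\). Everything else is counting.
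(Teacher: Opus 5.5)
Your proof is correct and complete. The paper gives no proof of this lemma; it only cites it from Huang--Zhang--Wang. So there is no in-text argument to compare against, and your proposal serves as a self-contained justification that applies to any $1$-plane drawing, not only rich ones. It rests on three facts:
\begin{enumerate}
\item At most three edges cross $C$, one per edge of $C$, so $a\le 3$.
\item Deleting $\{x,y,z\}$ together with one endpoint of each $I$--$O$ edge removes at most $6$ vertices.
\item When one side is small, $\delta(G)\ge 7$ forces too many $I$--$O$ edges.
\end{enumerate}

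The case split can be streamlined. Your Step~3 never uses $|I|\le a$, only $|I|\le 3$. So you can split directly on $|I|$:
\begin{itemize}
\item If $|I|\le 3$, the degree count gives at least $4>3\ge a$ edges from $I$ to $O$, a contradiction.
\item If $|I|\ge 4$, delete $x,y,z$ and the $I$-endpoints of the at most three $I$--$O$ edges. Some vertex of $I$ survives, $O$ is untouched, and no edge joins them.
\end{itemize}

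Your Step~1 relies on the standard conventions for drawings: edges do not pass through vertices, and every non-endpoint intersection is a crossing of exactly two edges. These conventions are what make ``each edge of $C$ is crossed at most once'' yield at most one $I$--$O$ edge per edge of $C$. That is implicit in the paper's setting, so relying on it is fine.
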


Let \(\mathbb{S}^2\) denote the sphere. Two \(1\)-planar drawings
\(\phi,\phi' : G \to \mathbb{S}^2\) are said to be \emph{weakly equivalent}
if there exist an automorphism \(\sigma\) of \(G\) and a self-homeomorphism
\(h\) of \(\mathbb{S}^2\) such that
$
h\circ \phi=\phi'\circ \sigma .
$

\begin{lemma}\label{lem:no-large-clique}
If \(G\) is a \(6\)-connected \(1\)-planar graph, then $G$ contains no $K_5$.
\end{lemma}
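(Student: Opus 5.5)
We argue by contradiction. Suppose \(G\) is \(6\)-connected and \(1\)-planar, and let \(X\) be a set of five vertices inducing \(K_5\). Fix a \(1\)-planar drawing of \(G\). Since \(G\) is \(6\)-connected, \(n\ge 7\), so \(G-X\) is nonempty, and every vertex has degree at least \(6\). The idea is to use the restricted drawing \(D\) of \(G[X]\cong K_5\) to find a set of at most five vertices that separates \(G\). Such a set contradicts \(6\)-connectivity.

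\textbf{Step 1: classify the drawings of \(K_5\).} Up to weak equivalence (automorphism of \(K_5\) and homeomorphism of \(\mathbb S^2\)), we list all \(1\)-planar drawings of \(K_5\). We may assume \(D\) is good, i.e.\ adjacent edges do not cross, since adjacent crossings can be removed without creating new ones. Because \(K_5\) is nonplanar, \(D\) has at least one crossing. The crossings pair up independent edges, and each edge is crossed at most once. So only a few configurations are possible:
\begin{itemize}
\item one crossing, i.e.\ a planar drawing of \(K_5-e\) with \(e\) added crossing one edge;
\item two crossings;
\item up to the maximum number of crossings a \(1\)-planar drawing of \(K_5\) allows.
\end{itemize}
For each type we record the cells of \(\mathbb S^2\setminus D\). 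For each cell \(R\) we record the set \(X_R\) of vertices of \(X\) on \(\partial R\) and the set \(U_R\) of boundary edge-segments lying on edges of \(K_5\) that are uncrossed in \(D\). Only these segments can be crossed by other edges of \(G\). An edge already crossed in \(D\) cannot be crossed again, so edges of \(G\) leave \(R\) only through \(X_R\) or through the segments in \(U_R\).

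\textbf{Step 2: build the separator.} Choose a cell \(R\) containing a vertex \(v\in V(G)\setminus X\). Each segment in \(U_R\) is crossed by at most one edge of \(G\). For each such crossing edge, put into \(S\) its endpoint inside \(R\) if that endpoint is not \(v\), and otherwise its endpoint outside \(R\). Also put \(X_R\) into \(S\). Then every path from \(v\) to a vertex outside \(\overline R\) meets \(S\). It remains to check that \(|S|\le 5\) and that some vertex outside \(S\cup\{v\}\) lies outside \(R\), for a suitable choice of \(R\). If some vertex of \(X\) is not on \(\partial R\), that vertex serves as the vertex outside. If instead all five vertices of \(X\) lie on \(\partial R\), we change roles. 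We pick \(x\in X\) whose incident cells (from the classification) have small boundary, and use the fact that \(\deg x\ge 6\) forces \(x\) to have a neighbour outside \(X\). This lets us run the same argument from the side of \(x\).

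\textbf{Main obstacle.} The delicate part is the case analysis in Steps 1--2. For each drawing type one must show that some cell containing a vertex of \(G-X\) has \(|X_R|+|U_R|\le 5\), or that a smarter choice of separator works. This must hold even when all of \(G-X\) sits in a single large cell, such as the outer cell of a one-crossing drawing, which sees four or five vertices of \(X\). There I would try first to exploit that \(X_R\) together with the crossing points of \(D\) forms a small closed curve, as in Observation~\ref{lem:separator-curve}. The aim is to show that some vertex of \(X\) is enclosed by a \(3\)- or \(4\)-cycle of \(D\) whose outside contains all of \(G-X\). That vertex then has all its non-\(X\) neighbours reachable only through at most one crossing per boundary edge, which yields a separator of size at most five.
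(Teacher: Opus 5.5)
\textbf{There is a genuine gap.} Your framework matches the paper's: restrict the drawing to $X$, take a cell $R$ containing some $v\notin X$, and separate $v$ using $X_R$ plus one endpoint for each crossable boundary segment. But the proposal stops exactly where the proof has to happen, and the inequality you say must be checked fails for half of the cells.

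Step 1 is never carried out. In fact the good $1$-planar drawing of $K_5$ is unique and has exactly one crossing (Figure~\ref{fig:K4K5-unique-1planar-drawings}(c)). All eight of its cells are triangles:
\begin{itemize}
\item four crossed cells, each bounded by two vertices of $X$, the crossing point $x$, and one uncrossed edge;
\item four uncrossed faces, each bounded by three vertices and three uncrossed edges.
\end{itemize}
So there is no ``large cell seeing four or five vertices of $X$'', and your fallback for that case is moot. For a crossed cell your count gives $|X_R|+|U_R|=2+1=3$, and the argument works. For an uncrossed face it gives $3+3=6$, so Step 2 as stated yields only a $6$-vertex cut and no contradiction with $6$-connectivity.

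\textbf{The missing idea is a two-stage argument.}
\begin{enumerate}
\item Use the crossed cells, via the $3$-vertex cut above, to show that no vertex of $G-X$ lies in any crossed cell.
\item Now let $v$ lie in an uncrossed face, say $u_1u_2u_5$. Its side $u_1u_2$ is shared with the crossed cell $u_1u_2x$. The other two sides of that cell lie on the already crossed edges $u_1u_4$ and $u_2u_3$. Hence an edge of $G$ crossing $u_1u_2$ would have to end at a vertex strictly inside that cell, and that cell is now empty. So $u_1u_2$ is not crossed at all.
\end{enumerate}
The only exits from the face are then $u_1,u_2,u_5$ and the at most two edges crossing $u_2u_5$ and $u_5u_1$. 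This gives at most five internally disjoint $v$--$u_4$ paths, which contradicts $6$-connectivity by Menger's theorem.

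Your alternative suggestion, a vertex of $X$ enclosed by a short cycle, does not close the gap. For example, $u_1$ is enclosed by the triangle $u_2u_5u_3$, whose side $u_2u_3$ is already crossed by $u_1u_4$. Counting exits then gives only $\deg u_1\le 4+2=6$, which is again compatible with $6$-connectivity.
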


\begin{proof}

Suppose, to the contrary, that \(G\) contains a subgraph $H$ isomorphic to $K_5$.
 Fix a \(1\)-planar drawing \(\varphi\) of \(G\), and let
\(\varphi|_H\) denote the restriction of \(\varphi\) to \(H\). The \(1\)-planar drawing of \(K_5\) is unique up to weak 
equivalence \cite{MatsumotoSuzuki}; see Figure~\ref{fig:K4K5-unique-1planar-drawings}(c).
Since \(G\) is \(6\)-connected, we have \(|V(G)|\ge 7\), and hence
\(G\) has a vertex outside \(H\). We shall show that no vertex
\(v\in V(G)\setminus V(H)\) can lie in any face of \(\varphi|_H\).

First, \(v\) cannot lie in a crossed triangular face  of \(\varphi|_H\). Indeed, if \(v\) lies
in such a face $T$, then for the vertex of \(H\) not incident with $T$,
there are at most three internally vertex-disjoint paths from \(v\) to that
vertex. By Menger's theorem, this implies that \(\kappa(G)\le 3\), a
contradiction.

Thus \(v\) must lie in an uncrossed triangular region of \(\varphi|_H\).
By the structure of the unique \(1\)-plane drawing of \(K_5\), each uncrossed
triangular region is adjacent, along one of its boundary edges, to a crossed
triangular region. Without loss of generality, suppose that \(v\) lies in the
uncrossed triangular face bounded by \(u_1u_2u_5u_1\), and that the adjacent
crossed triangular face along \(u_1u_2\) is bounded by \(u_1u_2xu_1\), where
\(x\) is the crossing point. Then there are at most five internally
vertex-disjoint paths from \(v\) to \(u_4\). Again by Menger's theorem,
\(\kappa(G)\le 5\), contradicting the \(6\)-connectivity of \(G\).

\end{proof}

\begin{figure}
\centering
\begin{tikzpicture}
    \tikzset{
        whitenode/.style={circle, draw, fill=black!10, inner sep=2pt},
        blueedge/.style={draw=blue, line width=1pt, opacity=0.7},
        blackedge/.style={draw=black, line width=1pt}
    }

    \def\xone{0}
    \def\xtwo{4.2}
    \def\xthree{8.4}

    \begin{scope}[shift={(\xone,0)}]
        \node[whitenode] (A) at (0,0.75) {};
        \node[whitenode] (B) at (0,2.05) {};
        \node[whitenode] (C) at (-1.13,0) {};
        \node[whitenode] (D) at (1.13,0) {};

        \draw[blackedge] (A) -- (B) -- (C) -- (A);
        \draw[blackedge] (A) -- (D) -- (B);
        \draw[blackedge] (C) -- (D);

        \node at (0,-0.75) {$(a)$};
    \end{scope}

    \begin{scope}[shift={(\xtwo,0)}]
        \node[whitenode] (E) at (-1,2) {};
        \node[whitenode] (F) at (1,2) {};
        \node[whitenode] (G) at (-1,0) {};
        \node[whitenode] (H) at (1,0) {};

        \draw[blackedge] (E) -- (F);
        \draw[blackedge] (G) -- (H);
        \draw[blackedge] (E) -- (G);
        \draw[blackedge] (F) -- (H);
        \draw[blueedge] (E) -- (H);
        \draw[blueedge] (F) -- (G);

        \node at (0,-0.75) {$(b)$};
    \end{scope}

    \begin{scope}[shift={(\xthree,0)}]
        \node[whitenode,label=above:$u_1$] (u1) at (-1,2) {};
        \node[whitenode,label=above:$u_2$] (u2) at (1,2) {};
        \node[whitenode,label=below:$u_3$] (u3) at (-1,0) {};
        \node[whitenode,label=below:$u_4$] (u4) at (1,0) {};
        \node[whitenode,label=above:$u_5$] (u5) at (0,3.1) {};
           \node[label=above:$x$] (x) at (0,1) {};
        \draw[blackedge] (u2)--(u5)--(u1)--(u2)--(u4)--(u3)--(u1);
        \draw[blueedge] (u1) -- (u4);
        \draw[blueedge] (u2) -- (u3);

        \draw[blackedge] (u5) to[in=120, out=-165, looseness=1.4] (u3);
        \draw[blackedge] (u5) to[in=60, out=-15, looseness=1.4] (u4);

        \node at (0,-0.75) {$(c)$};
    \end{scope}

\end{tikzpicture}
\caption{The two weakly inequivalent \(1\)-planar drawings of \(K_4\): 
(a) tetrahedral and (b) pyramidal; and (c) the unique \(1\)-planar drawing of \(K_5\).}
\label{fig:K4K5-unique-1planar-drawings}
\end{figure}

A 1-planar drawing is called \emph{rich} if every crossing pair forms a kite: whenever $vw$ and $xy$ cross, the four endpoints $v,w,x,y$ induce a $K_4$, and the four edges other than $vw$ and $xy$ are uncrossed. Gollin et al.~\cite{Gollin} proved the following elegant structural lemma.  In their original statement the last formula is given as an upper
bound; however, it is not difficult to see that equality
actually holds.
\begin{lemma}[Gollin et al.~\cite{Gollin}]\label{lem:gollin}
Let $G$ be a 3-connected maximal 1-planar graph of order at least five. Then $G$ has a rich 1-planar drawing $\phi$ such that $H=\Ske(\phi)$ is 3-connected and every face of $H$ has degree three or four. Moreover,
\[
\N(G,K_3)=s_3(H)+f_3(H)+4f_4(H).
\]
\end{lemma}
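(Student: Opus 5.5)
The existence part (a rich drawing $\phi$ whose skeleton $H=\Ske(\phi)$ is $3$-connected, with every face of degree three or four) I would take directly from Gollin et al.~\cite{Gollin}. What remains is to upgrade their inequality to the identity $\N(G,K_3)=s_3(H)+f_3(H)+4f_4(H)$. The plan is to partition the triangles of $G$ into three classes and count each class exactly.

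\textbf{Step 1: structure of the faces.} By maximality and richness, every $4$-face $abcd$ of $H$ contains exactly one crossing, namely that of the diagonals $ac$ and $bd$. Every crossed edge of $\phi$ arises this way, and the four vertices of each $4$-face span a kite $K_4$. This gives the three classes:
\begin{itemize}
\item[(i)] facial triangles of $H$, contributing $f_3(H)$;
\item[(ii)] the four triangles of the kite in each $4$-face, contributing $4f_4(H)$;
\item[(iii)] all remaining triangles.
\end{itemize}
Classes (i) and (ii) are pairwise disjoint. A kite triangle uses a diagonal, so it is not facial. Two different $4$-faces cannot share a kite triangle, because the crossed diagonal determines its $4$-face.

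\textbf{Step 2: a bijection between class (iii) and the $3$-separators of $H$.} Let $T=uvw$ be a triangle of $G$ in class (iii). I draw a closed curve $\ell_T$ as follows. Along each uncrossed edge of $T$, I follow the edge. For each crossed edge, say the diagonal $ac$ of a $4$-face $abcd$, I replace it by an arc inside that face from $a$ to $c$. Then $\ell_T\cap H=V(T)$. I claim both sides of $\ell_T$ contain vertices of $H$.
\begin{itemize}
\item If $T$ has no crossed edge, then $T$ is a non-facial triangle of the plane graph $H$, hence separating.
\item If $T$ uses the diagonal $ac$ and $T\ne acb,acd$, then $b$ and $d$ lie on opposite sides of $\ell_T$. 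The edge $bd$ meets $\ell_T$ only at the crossing, since $bd$ is already crossed once and cannot cross the other edges of $T$.
\end{itemize}
So $V(T)$ is a $3$-separator of $H$. The map $T\mapsto V(T)$ is injective, since a triangle is determined by its vertex set.

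For surjectivity, let $S$ be a $3$-separator of $H$ and take the curve $\ell$ from Observation~\ref{lem:separator-curve}. Between consecutive vertices of $S$, $\ell$ passes through a single face of $H$. That face has degree $3$ or $4$, so every pair of its vertices is adjacent in $G$: in a $3$-face along the boundary, and in a $4$-face via the kite. Hence $S$ spans a triangle $T$ of $G$.

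\textbf{Step 3: the main obstacle.} What is left is to show that this $T$ is not in classes (i) or (ii). In other words, no facial triangle and no kite triangle of $H$ has a vertex set that separates $H$. For a facial triangle, I would argue from $3$-connectivity of $H$ together with $|V(G)|\ge5$. The kite triangle $\{a,b,c\}$ of a $4$-face $abcd$ is the delicate case: $\ell$ could enter the face $abcd$ and leave through a different face, with $d$ on one side. My first attempt would be a rerouting argument. Using the extra face through which $\ell$ passes between two of $a,b,c$, I would show that either $G$ acquires a multiple edge or a second drawing of the edge $ac$ or $bd$, or some non-adjacent pair can be joined. The latter contradicts maximality. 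If this fails, I would fall back on checking whether the inequality in \cite{Gollin} already proves that each $3$-separator yields a triangle outside classes (i) and (ii). In that case only the converse direction of Step 2 is new, and Step 2 supplies it.
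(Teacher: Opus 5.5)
Your Steps 1 and 2 are sound. The map $T\mapsto V(T)$ from class (iii) into the $3$-separators of $H$ is exactly the argument behind the inequality of Gollin et al. Your observation that every $3$-separator $S$ spans a triangle of $G$ (via the curve of Observation~\ref{lem:separator-curve} and the degree-$3$/$4$ faces) is the same step the paper uses in the proof of Lemma~\ref{lem:skeleton-four-connected}. One small point: the claim that every $4$-face contains its crossing pair also uses $3$-connectivity of $H$, not just maximality and richness. If $abcd$ were empty, then $ac$ would be drawn elsewhere and $\{a,c\}$ would separate $b$ from $d$ in $H$.

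There is, however, a genuine gap. Step 3 is the only thing that separates equality from the known inequality, and you leave it as a plan. The rerouting argument you sketch for the kite case (multiple edges, a second drawing of $ac$, maximality) is not carried out. The fallback of hoping the inequality of Gollin et al.\ already contains the claim is not an argument. As written, your proposal only reproves the upper bound.

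The missing idea is a purely planar fact: in a $3$-connected plane graph $H$, no three vertices lying on the boundary of a common face form a separator. Suppose $S=\{x,y,z\}$ lies on a face $F$ and $H-S$ has components $C_1,C_2,\dots$.
\begin{itemize}
\item By $3$-connectivity, every $C_i$ has neighbours at all of $x,y,z$; otherwise two vertices of $S$ would already separate $C_i$ from the rest.
\item Add a new vertex $v$ inside $F$ joined to $x,y,z$. The resulting graph $H+v$ is still planar.
\item Contracting $C_1$ and $C_2$ gives a $K_{3,3}$ minor of $H+v$ with sides $\{v,C_1,C_2\}$ and $\{x,y,z\}$, a contradiction.
\end{itemize}
This disposes of classes (i) and (ii) at once. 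The vertex set of a facial triangle lies on a $3$-face, and the vertex set of a kite triangle $abc$ lies on the $4$-face $abcd$. So your worry that $\ell$ might pass through $abcd$ and another face with $d$ on one side never arises, and maximality plays no role here. The paper uses the same fact in Lemma~\ref{lem:skeleton-four-connected} (``$x,y,z$ cannot all lie on the boundary of a single face''). With this lemma inserted, your bijection gives $\N(G,K_3)=s_3(H)+f_3(H)+4f_4(H)$.
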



\section{The planar skeleton in the 7-connected case}
We next show that, at maximum connectivity, the skeleton has no 3-separator. This is the point where the endpoint assumption is used in an essential way.

\begin{lemma}\label{lem:pyramidal-k4}
Let $G$ be a 7-connected 1-plane graph.Then every copy of $K_4$ in $G$ is shown  Figure~\ref{fig:K4K5-unique-1planar-drawings}(b).
\end{lemma}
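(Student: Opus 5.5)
The plan is to work with the restriction \(\varphi|_H\) of the given 1-planar drawing of \(G\) to a copy \(H\cong K_4\). We may assume \(|V(G)|\ge 8\), since \(G\) is \(7\)-connected. The drawings of \(K_4\) in which each edge is crossed at most once fall, up to weak equivalence, into exactly the two classes of Figure~\ref{fig:K4K5-unique-1planar-drawings}(a),(b). There is one subtlety: two edges sharing an endpoint might cross. Such a crossing can be removed by a standard rerouting that changes nothing relevant here, or else it produces a closed curve through at most three vertices plus a crossing, and the separation argument below kills it in the same way. So it suffices to rule out the tetrahedral (planar) drawing (a). Suppose, then, that \(\varphi|_H\) is the planar drawing, with vertices \(u_1,\dots,u_4\) and four triangular faces.

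First, every edge of \(H\) is uncrossed by the other edges of \(H\). Hence each facial triangle \(u_iu_ju_k\) of \(\varphi|_H\) is a 3-cycle of \(G\) no two of whose edges cross each other. Each such triangle separates the plane into two regions. One region contains the fourth vertex of \(H\), a vertex of \(G-V(C)\). By Lemma~\ref{lem:no-sep-triangle} the triangle is not conflict, so the other region, which is the open face of \(\varphi|_H\) bounded by \(u_iu_ju_k\), contains no vertex of \(G\). Doing this for all four faces shows that \(V(G)=\{u_1,\dots,u_4\}\), contradicting \(|V(G)|\ge 8\).

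The main obstacle is making sure that the definition of conflict cycle really applies. The paper defines a conflict cycle only for cycles whose edges do not cross one another, and edges of \(H\) may still be crossed by edges of \(G\) outside \(H\). This causes no harm, since the definition only needs the closed curve to be simple, and it is. I would also briefly justify the classification of 1-planar drawings of \(K_4\) into (a) and (b), by citing \cite{MatsumotoSuzuki} or by a short case check: either no two edges cross, which gives (a), or some pair of independent edges crosses. In the second case the remaining four edges form a 4-cycle that can have no further crossings, because each edge of that 4-cycle is adjacent to both crossed edges; this gives (b).
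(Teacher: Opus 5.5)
Your argument is correct and is essentially the paper's proof. The paper picks a vertex $z\notin V(Q)$ (which exists since $|V(G)|\ge 8$) and places it in a triangular face of the tetrahedral drawing; that facial triangle, with the fourth vertex of $Q$ on the other side, is then a conflict $3$-cycle, contradicting Lemma~\ref{lem:no-sep-triangle}. Your version, showing all four faces are empty, is the same argument phrased contrapositively.

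The only slip is in your optional self-contained check of the classification. Adjacency of the $4$-cycle edges to the crossed pair does not stop two opposite edges of that $4$-cycle from crossing each other (e.g.\ $u_1u_2$ and $u_3u_4$ when $u_1u_4$ crosses $u_2u_3$). So either add that extra argument or simply cite \cite{MatsumotoSuzuki}, as the paper does.
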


\begin{proof}
There are two weakly inequivalent \(1\)-planar drawings of \(K_4\): the planar
tetrahedral drawing and the pyramidal drawing with one crossing
\cite{MatsumotoSuzuki}; see Figure~\ref{fig:K4K5-unique-1planar-drawings}(a)
and (b). Suppose that a copy \(Q\) of \(K_4\) in \(G\) is drawn as a planar
tetrahedron. Since \(G\) is \(7\)-connected, \(|V(G)|\ge 8\), and hence there
is a vertex \(z\in V(G)\setminus V(Q)\). In the drawing restricted to \(Q\),
the vertex \(z\) lies in a triangular face of the tetrahedral drawing.  It follows that the boundary of  the triangular face  bounds a conflicting \(3\)-cycle of
\(G\), contradicting Lemma~\ref{lem:no-sep-triangle}.
\end{proof}

In our previous joint work with Wang~\cite{Huang}, we proved that every
\(7\)-connected maximal \(1\)-planar graph contains a spanning \(4\)-connected
plane graph obtained by deleting one edge from each crossing pair.  Here we
need a stronger form: in a rich 1-planar drawing, the  planar skeleton  is already
\(4\)-connected.

\begin{lemma}\label{lem:skeleton-four-connected}
Let \(G\) be a \(7\)-connected maximal \(1\)-planar graph, and let \(\phi\)
be a rich \(1\)-planar drawing of \(G\). Then \(\Ske(\phi)\) is
\(4\)-connected.
\end{lemma}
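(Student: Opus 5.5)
We argue by contradiction, relying on the structure supplied by Lemma~\ref{lem:gollin}. Since \(G\) is \(7\)-connected, it is in particular \(3\)-connected and has at least eight vertices. Hence the skeleton \(H=\Ske(\phi)\) of a rich drawing is a spanning plane graph whose faces are triangles or quadrilaterals, and each quadrilateral face contains exactly one crossing pair forming a kite. For an arbitrary rich drawing we would first verify \(3\)-connectivity directly: every crossing pair \(vw,xy\) has its four kite edges uncrossed, so deleting crossed edges from \(G\) only removes chords of \(4\)-cycles of \(H\). Consequently any separator of \(H\) is also a separator of \(G\) once crossed edges are restored. More precisely, if \(S\) separates \(H\), then each crossing edge of \(G\) lies inside a face of \(H\) bounded by a kite \(4\)-cycle, and its endpoints lie on that face's boundary. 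Such an edge therefore joins two vertices that are already joined in \(H-S\) through the face boundary, unless the boundary meets \(S\).

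Suppose now that \(S=\{a,b,c\}\) is a separator of \(H\). By Observation~\ref{lem:separator-curve}, there is a simple closed curve \(\ell\) with \(\ell\cap H=S\) and with vertices of \(H-S\) on both sides. The curve \(\ell\) passes through exactly three faces of \(H\), say \(f_1,f_2,f_3\), where \(f_1\) contains the segment of \(\ell\) from \(a\) to \(b\), and so on. We will show that \(S\) extends to a small separator of \(G\). The edges of \(G\) not in \(H\) are the crossing edges, each lying inside a quadrilateral face of \(H\). The only crossing edges that can join the two sides of \(\ell\) are those inside \(f_1,f_2,f_3\) that cross \(\ell\). Take such a face \(f_i\) that is a quadrilateral \(a\,p\,b\,q\), with \(a,b\) opposite corners (otherwise \(\ell\) could be rerouted along an edge). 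Its two diagonals are \(ab\) and \(pq\), and only \(pq\) crosses \(\ell\), joining \(p\) on one side to \(q\) on the other. In this case we add one of \(p,q\) to the separator.

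This yields a separator of \(G\) of size at most \(3+3=6<7\), a contradiction, provided both sides of \(\ell\) still contain a vertex after the additional deletions. The main obstacle is exactly this counting/degenerate case: one side of \(\ell\) might consist only of the vertices we delete, for instance when it contains one or two vertices. To handle it, we use the degree condition: \(\delta(G)\ge 7\), and every neighbour of an interior vertex lies either on its side or in \(S\cup\{p,q\text{'s}\}\). If a side contains only few vertices, we count the neighbourhood of a vertex there and bound it by at most \(6\) vertices in total, giving a contradiction.

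There is one further alternative when \(f_i\) is a triangle. In that case \(\ell\) crosses no edge inside \(f_i\), and if all three faces are triangles then \(abc\) is a triangle of \(G\) whose two sides both contain vertices of \(G\). That would be a conflict \(3\)-cycle, which Lemma~\ref{lem:no-sep-triangle} forbids, provided \(ab,bc,ca\) are edges. When some \(f_i\) are quadrilaterals, \(a\) and \(b\) are opposite corners, so \(ab\) is a crossing diagonal and hence an edge of \(G\). Thus \(abc\) is always a \(3\)-cycle of \(G\), though its edges may be crossed. If the small-side counting becomes messy, we will fall back on this cycle-based argument, treating the cases by the number of quadrilaterals among \(f_1,f_2,f_3\).
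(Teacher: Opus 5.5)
Your main argument is correct but follows a different route from the paper. The paper's proof is essentially your closing `fallback' paragraph. It notes that each arc of $\ell$ joins two vertices of $S$ that are adjacent in $G$, via a boundary edge of $H$ or the crossed diagonal of a $4$-face. The three faces are distinct because $x,y,z$ are not cofacial (Tutte). Replacing each arc by the corresponding edge inside its face then gives a $3$-cycle whose edges do not cross each other and which still has vertices on both sides. That is a conflict $3$-cycle, contradicting Lemma~\ref{lem:no-sep-triangle}.

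You instead count the edges of $G$ that cross $\ell$. Each is a crossed diagonal $pq$ of a traversed $4$-face whose arc joins opposite corners: triangular faces contain no crossed edges, and when the arc joins adjacent corners both diagonals meet $S$. So there are at most three such edges, and $S$ together with one endpoint of each is a vertex cut of size at most $6$. This is self-contained: it reproves, in this setting, exactly the case of the Huang--Zhang--Wang lemma that is needed, and it does not need the three faces to be distinct. The paper's version is shorter because it outsources this counting to Lemma~\ref{lem:no-sep-triangle}.

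Your degenerate case should be made explicit. Delete the endpoints lying on one side $I$. If this empties $I$, then $|I|\le 3$ and each $v\in I$ is incident with $k_v\ge 1$ of the at most three diagonals, so $k_v\le 4-|I|$. Since $N_G(v)\subseteq (I\setminus\{v\})\cup S\cup\{\text{partners of }v\}$, we get $\deg_G(v)\le (|I|-1)+3+(4-|I|)=6$, contradicting $\delta(G)\ge 7$.

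Also drop the sentence in your first paragraph claiming that every separator of $H$ separates $G$. It is false as stated, since the diagonals $pq$ are precisely what can reconnect the sides. Like the paper, you can simply take the $3$-connectivity and the face structure of $H$ from Lemma~\ref{lem:gollin}.
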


\begin{proof}
Put \(H=\Ske(\phi)\). By Lemma~\ref{lem:gollin}, \(H\) is \(3\)-connected and
every face of \(H\) has degree three or four. Suppose, to the contrary, that
\(H\) has a \(3\)-separator \(S=\{x,y,z\}\).

By Observation~\ref{lem:separator-curve}, there is a simple closed curve \(\ell\)
meeting \(H\) exactly in the vertices \(x,y,z\), such that the two regions
separated by \(\ell\) both contain vertices of \(H-S\). We may assume that
\(x,y,z\) appear on \(\ell\) in this cyclic order. Let \(\ell_{xy}\),
\(\ell_{yz}\), and \(\ell_{zx}\) denote the three subarcs of \(\ell\) between
consecutive vertices of \(S\).

The vertices \(x,y,z\) cannot all lie on the boundary of a single face of
\(H\). Indeed, in a \(3\)-connected plane graph, the vertex set of a facial
cycle is not a separator~\cite{Tutte}.

Now consider \(\ell_{xy}\), and let \(F_{xy}\) be the face of \(H\) containing
it. Since every face of \(H\) has degree three or four, either \(x\) and \(y\)
are adjacent on the boundary of \(F_{xy}\), or \(F_{xy}\) is a \(4\)-face and
\(x,y\) are opposite vertices of \(F_{xy}\). In the first case, \(xy\in E(H)\).
In the second case, the richness of \(\phi\) gives the crossed diagonal
\(xy\in E(G)\) inside \(F_{xy}\). Hence \(xy\in E(G)\). The same argument
applied to \(\ell_{yz}\) and \(\ell_{zx}\) gives \(yz,zx\in E(G)\).

Therefore \(xyzx\) is a \(3\)-cycle of \(G\). Moreover, replacing each arc
\(\ell_{xy},\ell_{yz},\ell_{zx}\) by the corresponding edge \(xy,yz,zx\) is
done inside the same face of \(H\), and hence does not move any vertex of
\(H-S\) from one side of the curve to the other. Thus the closed curve induced
by the \(3\)-cycle \(xyzx\) still has vertices of \(H-S\) on both sides. Hence
\(xyzx\) is a conflict \(3\)-cycle in \(\phi\), contradicting
Lemma~\ref{lem:no-sep-triangle}. Therefore \(H\) is \(4\)-connected.
\end{proof}

\begin{lemma}\label{lem:counts-by-faces}
Let $G$ be a 7-connected maximal 1-planar graph, let $\phi$ be a rich 1-planar drawing of $G$, and put $H=\Ske(\phi)$. Then
\[
\N(G,K_3)=f_3(H)+4f_4(H),
\qquad
\N(G,K_4)=f_4(H).
\]
\end{lemma}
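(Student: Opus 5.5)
The triangle formula should follow directly from Lemma~\ref{lem:gollin} combined with Lemma~\ref{lem:skeleton-four-connected}. Since \(G\) is \(7\)-connected, it is \(3\)-connected and has order at least eight, so Lemma~\ref{lem:gollin} applies and gives
\[
\N(G,K_3)=s_3(H)+f_3(H)+4f_4(H).
\]
The drawing \(\phi\) in the present statement is an arbitrary rich drawing, whereas Lemma~\ref{lem:gollin} supplies one particular rich drawing. I would therefore not rely on that specific drawing. Instead, I would reprove the count for the given \(\phi\) using only three facts: richness, the fact that every face of \(H\) has degree three or four, and the \(4\)-connectivity of \(H\) from Lemma~\ref{lem:skeleton-four-connected}. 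Once \(H\) is known to be \(4\)-connected, \(s_3(H)=0\) and the first formula follows.

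To count triangles directly, I would classify each triangle \(xyz\) of \(G\) by how many of its edges are crossed.

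\emph{No crossed edges.} Then \(xyz\) is a \(3\)-cycle of \(H\). Since \(H\) is \(4\)-connected, this cycle is not separating, so it bounds a \(3\)-face of \(H\).

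\emph{At least one crossed edge.} Say \(xy\) crosses some edge \(uv\). By richness, the four vertices \(x,y,u,v\) induce a kite, and \(xy\) lies inside the \(4\)-face \(xuyv\) of \(H\). If \(z\notin\{u,v\}\), the triangle \(xyz\) cannot be a conflict cycle by Lemma~\ref{lem:no-sep-triangle}. Arguing with the closed curve formed by the triangle, as in the proof of Lemma~\ref{lem:skeleton-four-connected}, its edges would then give a separating triple \(\{x,y,z\}\) of \(H\) with \(u\) and \(v\) on opposite sides, unless \(z\in\{u,v\}\). This step needs care, because \(u\) and \(v\) lie on opposite sides of \(xy\) locally, and that local separation must be turned into a global one. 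So every triangle with a crossed edge uses three of the four vertices of a single \(4\)-face. Each \(4\)-face contributes exactly four such triangles, and distinct \(4\)-faces give distinct triangles, because a triangle containing a crossed edge determines that edge's unique crossing pair.

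Together these give \(\N(G,K_3)=f_3(H)+4f_4(H)\), with \(3\)-faces matched bijectively to uncrossed triangles, since a facial triangle is clearly a triangle.

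For \(K_4\), every \(4\)-face with its two crossing diagonals is a \(K_4\), which gives at least \(f_4(H)\) copies. Conversely, by Lemma~\ref{lem:pyramidal-k4} every \(K_4\) is drawn in the pyramidal way, so it contains exactly one crossing pair \(vw,xy\) among its own edges. By richness, the four endpoints of that crossing are exactly the vertex set of this \(K_4\) and form the kite of a \(4\)-face of \(H\). Distinct copies of \(K_4\) determine distinct crossings and hence distinct \(4\)-faces, so \(\N(G,K_4)=f_4(H)\).

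The main obstacle is the triangle case with a crossed edge: ruling out a third vertex \(z\) outside the kite. I would handle it with the planar-curve argument. Replace a crossed edge of \(xyz\) by a path through the kite, so that all three sides become curves in faces of \(H\) through \(x,y,z\). Then show that the kite vertices \(u\) and \(v\) lie on opposite sides of this curve. That makes \(\{x,y,z\}\) a \(3\)-separator of \(H\), contradicting Lemma~\ref{lem:skeleton-four-connected}.
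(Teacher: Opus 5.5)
Your proposal is sound, but for the triangle formula it takes a different route from the paper. The paper's proof is a two-line citation: Lemma~\ref{lem:skeleton-four-connected} gives $s_3(H)=0$, and the formula of Lemma~\ref{lem:gollin} finishes. Gollin et al.\ state that formula only as an upper bound, and the paper asserts equality without proof. Your $K_4$ count is the same as the paper's (Lemma~\ref{lem:pyramidal-k4} plus the kite structure), just spelled out. For triangles, you mention the paper's route but then count directly: uncrossed triangles are $3$-faces, and a triangle with a crossed edge lies in the kite of that crossing, so each $4$-face contributes exactly four triangles. What this buys is a self-contained proof of the equality, independent of the unproved upgrade of Gollin et al.'s inequality. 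It also shows transparently why no separator term survives; the price is length.

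It does not, however, achieve your stated aim of independence from the particular drawing. You still use three facts that the paper establishes only for the drawing supplied by Lemma~\ref{lem:gollin}: that faces of $H$ have degree three or four, that every $4$-face of $H$ actually contains a crossing pair (needed for ``exactly four triangles per $4$-face''), and Lemma~\ref{lem:skeleton-four-connected}. This is harmless, since that is the drawing used in the main proof.

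The step you flag as the main obstacle closes in one line and needs no detour through separators of $H$. Suppose $xy$ crosses $uv$ and $z\notin\{u,v\}$. Then $u,v\notin\{x,y,z\}$, and by $1$-planarity $uv$ is crossed only by $xy$. So the arc $uv$ meets the closed curve formed by $xyz$ in exactly one point, where it crosses transversally. Hence $u$ and $v$ lie on opposite sides, $xyz$ is a conflict $3$-cycle, and Lemma~\ref{lem:no-sep-triangle} gives the contradiction directly.

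No rerouting of the crossed edge is needed, since it already lies inside the $4$-face. Rerouting it along a path through $u$ or $v$, as you suggest, would actually put that vertex on the curve and destroy the separation you want to exhibit.

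The same lemma also handles the uncrossed case without invoking $4$-connectivity of $H$. A non-conflict uncrossed triangle has a side with no vertices. Because $G$ is simple and the triangle's edges are uncrossed, that side contains no edges, so it is a $3$-face.
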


\begin{proof}
By Lemma~\ref{lem:skeleton-four-connected}, $s_3(H)=0$. The formula for triangles follows immediately from Lemma~\ref{lem:gollin}.
By richness, every 4-face of \(H\) gives a copy of \(K_4\) in \(G\). Conversely,
Lemma~\ref{lem:pyramidal-k4} implies that every copy of \(K_4\) in \(G\) is
pyramidal, and by Lemma~\ref{lem:gollin} all \(K_4\) comes from a 4-face of \(H\). Hence $\N(G,K_4)=f_4(H)$.

\end{proof}

\section{Proof of the theorem}
We first prove the upper bounds. Let $G$ be a 7-connected 1-planar graph. We may add edges to a fixed 1-planar drawing until it becomes maximal 1-planar on the same vertex set. Adding edges cannot decrease vertex-connectivity and cannot decrease the number of cliques. Thus it suffices to prove the bounds for a maximal 1-planar supergraph of $G$.

Assume, then, that $G$ is maximal 1-planar, and choose a rich 1-planar drawing $\phi$ given by Lemma~\ref{lem:gollin}. Put $H=\Ske(\phi)$ and write $f_i=f_i(H)$. Since every face of $H$ has degree three or four, Euler's formula gives
\begin{equation}\label{eq:euler-faces}
        f_3+2f_4=2n-4.
\end{equation}
Indeed, from $3f_3+4f_4=2|E(H)|$ and $n-|E(H)|+f_3+f_4=2$, eliminating $|E(H)|$ gives \eqref{eq:euler-faces}.

The uncrossed triangular faces of the rich 1-planar drawing are exactly the 3-faces of $H$. Since $G$ is 7-connected and 1-planar, $\delta(G)= 7$. Applying Lemma~\ref{lem:biedl} to the (triangulated) rich 1-planar  drawing gives
\begin{equation}\label{eq:f3-lower}
        f_3\ge 8.
\end{equation}

The edge bound follows from the edge formula in Lemma~\ref{lem:biedl}:
\[
|E(G)|=4n-8-\frac{f_3}{2}\le 4n-12.
\]
Thus $\N(G,K_2)\le 4n-12$.

For triangles, Lemma~\ref{lem:counts-by-faces} and \eqref{eq:euler-faces} give
\[
\N(G,K_3)=f_3+4f_4
       =f_3+2(2n-4-f_3)
       =4n-8-f_3.
\]
Using \eqref{eq:f3-lower}, we get
\[
        \N(G,K_3)\le 4n-16.
\]
Similarly,
\[
\N(G,K_4)=f_4=\frac{2n-4-f_3}{2}\le n-6.
\]
Finally, Lemma~\ref{lem:no-large-clique} gives $\N(G,K_t)=0$ for every $t\ge 5$. The cases $t=0$ and $t=1$ are immediate from $\N(G,K_0)=1$ and from $\N(G,K_1)=n$.

Therefore
\[
\sum_{t\ge 0}\N(G,K_t)
\le 1+n+(4n-12)+(4n-16)+(n-6)=10n-33.
\]
This proves the asserted upper bound on the total number of cliques.

The bounds are attained for infinitely many orders.  Hoffmann, Reddy and
Seemann~\cite{HoffmannReddySeemann} constructed \(7\)-connected triangulated
1-planar graphs \(G_k\) of order \(n=24+8k\) where \(k\) is a nonnegative integer, called
\(k\)-layered double stop-sign graphs.  One checks directly that these graphs satisfy the required conditions. Thus all
bounds in Theorem~\ref{thm:main} are sharp for infinitely many values of \(n\).

\begin{remark}It is not hard to see from the proof that, unlike in the general setting
considered by Gollin et al.~\cite{Gollin}, the same extremal examples attain
the bounds for all clique sizes considered here in the \(7\)-connected case.
\end{remark}
\section{Further questions}

The examples of Hoffmann et al. \cite{HoffmannReddySeemann} show that equality in
Theorem~\ref{thm:main} is attained for infinitely many orders
\(n=24+8k\).  It would be very interesting to determine the remaining orders
for which equality can be attained.  Orders \(25\) and \(27\) are already
exceptional, since there is no \(7\)-connected \(1\)-planar graph of these
orders \cite{Huang}.  Apart from the exceptional orders \(25\) and \(27\),
we believe that the upper bounds are attainable for all remaining orders $n\ge 24$.

Furthermore, it remains to determine the sharp upper bounds for
\(\mathcal N(G,K_t)\) in  \(1\)-planar graphs with connectivity $r$ for
\(r=4,5,6\) and \(3\le t\le 5\).

\end{document}